\newtheorem{theorem}{Theorem}
\newtheorem*{lemma}{Lemma}
\newtheorem*{corollary}{Corollary}
\newcommand{\R}{\mbox{$\mathbb{R}$}}
\newcommand{\C}{\mbox{$\mathbb{C}$}}
\newcommand{\F}{\mbox{$\mathbb{F}$}}
\begin{document}

\title[Continuity and Discontinuity of Seminorms]
{Continuity and Discontinuity of Seminorms\\ on Infinite-Dimensional Vector Spaces}

\author[Jacek Chmieli\'nski]{Jacek Chmieli\'nski}
\address{Department of Mathematics, Pedagogical University of Krak\'ow\\ Krak\'{o}w, Poland\\
E-mail: jacek.chmielinski@up.krakow.pl}
%\email{jacek.chmielinski@up.krakow.pl}
\author[Moshe Goldberg]{Moshe Goldberg$^{1}$}\thanks{$^{1}$Corresponding author}
\address{Department of Mathematics, Technion -- Israel Institute of Technology\\ Haifa, Israel\\
E-mail: mg@technion.ac.il}
%\email{mg@technion.ac.il}

\begin{abstract}
Let $S$ be a seminorm on an infinite-dimensional real or complex vector space~$X$. Our purpose in
this note is to study the continuity and discontinuity properties of $S$ with respect to certain norm-topologies
on $X$.
\end{abstract}

\keywords{Infinite-dimensional vector spaces, norms, seminorms, norm-topologies, continuity}
\subjclass[2010]{15A03, 47A30, 54A10, 54C05}

\maketitle

Throughout this note let $X$ be a vector space over a field $\F$, either $\R$ or $\C$. As usual, a~real-valued function $N\colon X\to\R$ is said to be a {\em norm} on $X$ if, for all $x,y\in X$ and $\alpha\in\F$,
$$
\begin{array}{l}
N(x)>0,\quad x\neq 0,\\
N(\alpha x)=|\alpha| N(x),\\
N(x+y)\leq N(x)+N(y).
\end{array}
$$

Further, a real-valued function $S\colon X\to\R$ is a {\em seminorm} on $X$ if, for all $x,y\in X$ and $\alpha\in\F$,
$$
\begin{array}{l}
S(x)\geq 0,\\
S(\alpha x)=|\alpha| S(x),\\
S(x+y)\leq S(x)+S(y);
\end{array}
$$
hence, a seminorm is a norm if and only if it is positive-definite.

We recall that if $X$ is finite-dimensional, then all norms on $X$ are equi\-valent, hence inducing on $X$ a unique norm-topology. This well-known fact leads to the following result.

\begin{theorem}[\cite{G}]\label{t1}
Let $S$ be a seminorm on a finite-dimensional vector space $X$ over $\F$, either $\R$ or $\C$. Then $S$ is continuous with respect to the unique norm-topology on $X$.
\end{theorem}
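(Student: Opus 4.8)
The plan is to exploit the fact, recalled just before the theorem, that on a finite-dimensional space all norms are equivalent, so that it suffices to establish continuity of $S$ with respect to any single conveniently chosen norm. First I would fix a basis $e_1,\dots,e_n$ of $X$ and work with the coordinate norm $N(x)=\max_{1\le i\le n}|\alpha_i|$, where $x=\sum_{i=1}^{n}\alpha_i e_i$. Since $N$ induces precisely the unique norm-topology on $X$, proving continuity of $S$ relative to $N$ is exactly what is required.

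The key inequality I would derive at the outset is the reverse triangle inequality for seminorms. From subadditivity, $S(x)\le S(x-y)+S(y)$ and $S(y)\le S(y-x)+S(x)$, while homogeneity gives $S(y-x)=S(x-y)$; together these yield $|S(x)-S(y)|\le S(x-y)$ for all $x,y\in X$. This reduces the whole problem to bounding $S(x-y)$ in terms of $N(x-y)$.

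Next I would estimate $S$ on an arbitrary vector. Writing $z=\sum_{i=1}^{n}\gamma_i e_i$ and applying subadditivity and homogeneity coordinate by coordinate gives $S(z)\le\sum_{i=1}^{n}|\gamma_i|\,S(e_i)\le C\,N(z)$, where $C:=\sum_{i=1}^{n}S(e_i)$ depends only on the fixed basis and on $S$. Combining this with the reverse triangle inequality produces $|S(x)-S(y)|\le C\,N(x-y)$, so that $S$ is in fact Lipschitz continuous with respect to $N$, and hence continuous in the unique norm-topology.

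As for obstacles, I expect none of the individual steps to be genuinely difficult, the estimate being essentially the standard argument that a functional assembled from a basis is bounded. The only place where the hypotheses are truly used is the reduction of the first paragraph, which rests on the equivalence of norms in finite dimensions; without it one could not pass freely from the abstract norm-topology to the explicit coordinate norm $N$. Everything else follows directly from the seminorm axioms.
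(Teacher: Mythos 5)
Your proposal is correct, and it is essentially the same argument the paper relies on: the paper cites \cite{G} for this theorem rather than reproving it, but your two ingredients---the reverse triangle inequality $|S(x)-S(y)|\le S(x-y)$ and the coordinate-wise bound $S(z)\le C\,N(z)$---are precisely the inequalities \eqref{e2} and \eqref{e5} that the paper itself assembles (with $C=1$, after normalizing the basis) in the proof of Theorem~\ref{t3}(a). Nothing is missing: your reduction to the single coordinate norm $N$ via the equivalence of all norms in finite dimensions is legitimate, and it is correctly identified as the only point where finite-dimensionality is used.
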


Contrary to the finite-dimensional case, if $X$ is infinite-dimensional then not all norms on $X$ are equivalent, and consequently we no longer have a~unique norm-topology. Indeed, our main purpose in this
note is to explore the questions of continuity and discontinuity of seminorms when the assumption of the finite-dimensionality is removed.

We begin our journey by posting the following lemma.

\begin{lemma}
Let $X$, an infinite-dimensional vector space over $\F$, either $\R$ or $\C$, be equipped with a seminorm $S$ and a norm $N$. Then:
\begin{enumerate}
\item[{\rm (a)}]
Continuity of $S$ at the origin implies ubiquitous continuity with respect to the topology induced by $N$.
\item[{\rm (b)}]
Discontinuity of $S$ at the origin implies ubiquitous discontinuity with respect to the above mentioned topology.
\end{enumerate}
\end{lemma}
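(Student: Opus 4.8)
The plan is to reduce everything to two elementary facts about a seminorm and to observe that infinite-dimensionality plays no role: the lemma is really a statement about the interplay of sub-additivity and absolute homogeneity with the metric induced by $N$. First I would record that $S(0)=0$, which is immediate from $S(0)=S(0\cdot x)=|0|\,S(x)=0$, and the \emph{reverse triangle inequality}
$$|S(x)-S(y)|\le S(x-y),\qquad x,y\in X,$$
which follows by applying sub-additivity to $x=(x-y)+y$ and to $y=(y-x)+x$ and using $S(-z)=S(z)$. These two facts drive both parts.

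For part (a) I would argue directly with an $\varepsilon$-$\delta$ estimate. Fix $x_0\in X$ and $\varepsilon>0$. Continuity of $S$ at the origin, together with $S(0)=0$, yields $\delta>0$ such that $N(u)<\delta$ implies $S(u)<\varepsilon$. Then for every $x$ with $N(x-x_0)<\delta$ the reverse triangle inequality gives $|S(x)-S(x_0)|\le S(x-x_0)<\varepsilon$, which is precisely continuity of $S$ at $x_0$. Since $x_0$ is arbitrary, $S$ is continuous everywhere.

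For part (b) I would prove the contrapositive: if $S$ is continuous at some point $x_0$, then it is continuous at the origin, whence by part (a) it is continuous everywhere. The first step is to convert continuity at $x_0$ into a \emph{bound near the origin}. Applying continuity at $x_0$ with $\varepsilon=1$ produces $\delta_0>0$ with $S(w)<S(x_0)+1$ whenever $N(w-x_0)<\delta_0$; writing $w=x_0+z$ and using sub-additivity in the form $S(z)\le S(x_0+z)+S(x_0)$ gives a constant $C:=2S(x_0)+1$ with $S(z)\le C$ for all $z$ satisfying $N(z)<\delta_0$. The second step upgrades this local bound to a global linear estimate by homogeneity: for $z\neq0$ I scale by $\lambda=\delta_0/(2N(z))$ so that $N(\lambda z)<\delta_0$, obtaining $S(z)=\lambda^{-1}S(\lambda z)\le (2C/\delta_0)\,N(z)$. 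Thus $S(z)\le K\,N(z)$ with $K=2C/\delta_0$, and since $|S(x)-S(y)|\le S(x-y)\le K\,N(x-y)$, the seminorm is (Lipschitz) continuous, in particular at the origin.

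The inequalities at the start are routine; the step that deserves care, and which I expect to be the main obstacle, is the passage in (b) from a bound on a single ball around $x_0$ to a bound on a ball around the origin and then to the linear estimate $S\le K\,N$. Here one must use sub-additivity to recenter at the origin and absolute homogeneity to rescale, after which continuity follows automatically.
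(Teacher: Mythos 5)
Your proposal is correct, and while your part (a) is essentially the paper's argument (you phrase it with $\varepsilon$--$\delta$, the paper with sequences---a cosmetic difference), your part (b) takes a genuinely different route. The paper argues directly: from discontinuity at the origin it extracts, via subsequences, a sequence with $N(x_n)\le 1/n^2$ and $S(x_n)\ge\kappa>0$, scales to $y_n=nx_n$ so that $N(y_n)\to 0$ while $S(y_n)\to\infty$, and then translates, $z_n=x+y_n$, to exhibit an explicit witness to discontinuity at every point $x$. You instead prove the contrapositive: continuity at a single point $x_0$ yields, by recentering with subadditivity ($S(z)\le S(x_0+z)+S(x_0)$), a bound $S\le C:=2S(x_0)+1$ on a ball about the origin, which absolute homogeneity upgrades to the global estimate $S(z)\le K\,N(z)$ with $K=2C/\delta_0$; the rescaling $\lambda=\delta_0/(2N(z))$ is legitimate since $N(z)>0$ for $z\ne 0$, and $S(0)=0$ covers $z=0$. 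Your route proves strictly more than is asked: continuity at one point forces $S$ to be \emph{left-equivalent} to $N$ in the sense of the paper's closing remark ($S\le\tau N$ for some $\tau>0$), so the dichotomy ``Lipschitz everywhere or discontinuous everywhere'' becomes transparent, part (a) is subsumed as the case $x_0=0$, and Theorem 2 follows immediately. What the paper's direct construction buys in exchange is quantitative information about the failure: at every point $x$ it produces $z_n\to x$ with $S(z_n)\to\infty$, showing $S$ is not even locally bounded near any point, not merely discontinuous. You are also right that infinite-dimensionality plays no role in the lemma itself; it enters only later, in Theorem 3(b), where a norm making $S$ discontinuous must be constructed.
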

\begin{proof}
The assertion in (a) is well known to many, old and young. Yet, for the reader's convenience, we provide a short proof.

Let $S$ be continuous at $x=0$ with respect to the topology induced by the given norm; hence, if $\{x_n\}_{n=1}^{\infty}$ is any sequence in $X$, then
\begin{equation}\label{e1}
N(x_n)\to 0\quad \mbox{implies}\quad S(x_n)\to 0\quad \mbox{as}\ n\to\infty.
\end{equation}
Choosing $x\in X$, and taking an arbitrary sequence $\{y_n\}_{n=1}^{\infty}$ for which $N(x-y_n)\to 0$, \eqref{e1}~implies that,
$$
S(x-y_n)\to 0\quad \mbox{as}\ n\to\infty.
$$
Thus, aided by the familiar inequality
\begin{equation}\label{e2}
|S(x)-S(y)|\leq S(x-y),\qquad x,y\in X,
\end{equation}
which follows from the subadditivity of $S$, we get,
$$
|S(x)-S(y_n)|\to 0,
$$
and part (a) of the lemma is in the bag.

As for part (b), the assumption that $S$ is discontinuous at $x=0$ implies that there exists a sequence
$\{x_n\}_{n=1}^{\infty}$ in $X$ such that
$$
N(x_n)\to 0\quad \mbox{but}\quad S(x_n)\not\to 0\quad \mbox{as}\ n\to\infty.
$$
Since $S(x_n)\not\to 0$ and $S$ is nonnegative, we can provide, by passing if necessary to a subsequence, a positive
constant $\kappa$ such that
$$
S(x_n)\geq\kappa,\qquad  n =1,2,3,\ldots .
$$
Moreover, since $N(x_n)\to 0$, by passing to yet another subsequence if necessary, we may assume that
$$
N(x_n)\leq \frac{1}{n^2},\qquad n=1,2,3,\ldots .
$$

Now, put
$$
y_n=nx_n,\qquad n=1,2,3,\ldots .
$$
Since $N(y_n)\leq\frac{1}{n}$ and $S(y_n)\geq n\kappa$, we see that
$$
N(y_n)\to 0\quad \mbox{but}\quad S(y_n)\to\infty\quad \mbox{as}\ n\to\infty.
$$
It follows that at any point $x\in X$, the sequence $\{z_n\}_{n=1}^{\infty}$ defined by
$$
z_n = x + y_n,\qquad n=1,2,3,\ldots,
$$
satisfies
$$
N(x-z_n)=N(y_n)\to 0.
$$
On the other hand,
$$
S(x-z_n)=S(y_n)\to\infty,
$$
which forces the discontinuity of $S$ at $x$.
\end{proof}
From the above lemma we may deduce the following result which hardly requires a proof.

\begin{theorem}\label{t2}
Let $X$, an infinite-dimensional vector space over $\F$, either $\R$ or $\C$, be equipped with a seminorm $S$ and a norm $N$. Then:
\begin{enumerate}
\item[{\rm (a)}]
$S$ is ubiquitously continuous in $X$ with respect to the topology induced by $N$ if and only if there exists some point of $X$ at which $S$ is continuous.
\item[{\rm (b)}]
Similarly, $S$ is ubiquitously discontinuous in $X$ with respect to the above mentioned topology if and only if there exists some point of $X$ at which $S$ is discontinuous.
\end{enumerate}
\end{theorem}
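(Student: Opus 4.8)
The plan is to derive Theorem~\ref{t2} as an immediate corollary of the Lemma, the key mechanism being that the Lemma forces a clean dichotomy for the global behavior of $S$. First I would dispose of the two forward implications, which are trivial: if $S$ is ubiquitously continuous then a fortiori it is continuous at some point of $X$, and likewise if $S$ is ubiquitously discontinuous it is certainly discontinuous at some point.

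For the substantive backward implications, I would begin by observing that the Lemma splits the possible behavior of $S$ into exactly two mutually exclusive cases, decided entirely by what happens at the origin. Indeed, either $S$ is continuous at $0$, in which case part~(a) of the Lemma yields ubiquitous continuity, or $S$ is discontinuous at $0$, in which case part~(b) of the Lemma yields ubiquitous discontinuity. Hence $S$ is \emph{either} continuous everywhere \emph{or} discontinuous everywhere, with no intermediate possibility.

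Granting this dichotomy, part~(a) of the theorem follows at once: if $S$ is continuous at some point $x_0\in X$, then $S$ cannot be ubiquitously discontinuous, since it is continuous at $x_0$, so by the dichotomy it must be ubiquitously continuous. The argument for part~(b) is the mirror image: if $S$ is discontinuous at some $x_0\in X$, it cannot be ubiquitously continuous, whence by the dichotomy it is ubiquitously discontinuous.

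The only point requiring any care, and the closest thing to an obstacle, is recognizing that the Lemma's two parts, though each phrased as a one-way implication anchored at the origin, together transfer information from an \emph{arbitrary} point $x_0$ back to the origin and thence to all of $X$. Once this dichotomy is made explicit, however, both equivalences are immediate, which is why the result hardly requires a proof.
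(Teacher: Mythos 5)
Your proposal is correct and follows exactly the route the paper intends: the paper offers no written proof, noting the theorem ``hardly requires a proof'' once the Lemma is in hand, and your explicit dichotomy (continuity or discontinuity of $S$ at the origin, each propagating everywhere by the Lemma's parts (a) and (b)) is precisely the implicit argument. Making that dichotomy explicit, together with the trivial forward implications, settles both equivalences just as the authors envisage.
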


We turn now to the main purpose of this note, which is to show that given a nontrivial seminorm $S$ on $X$, there exist two different norms, such that $S$ is ubiquitously continuous in $X$ with respect to one of them,
and ubiquitously discontinuous with respect to the other.

\begin{theorem}\label{t3}
Let $S\neq 0$ be a seminorm on an infinite-dimensional vector space $X$ over $\F$, either $\R$ or $\C$. Then:
\begin{enumerate}
\item[{\rm (a)}]
There exists a norm with respect to which $S$ is ubiquitously continuous in $X$.
\item[{\rm (b)}]
There exists a norm with respect to which $S$ is ubiquitously dis\-con\-ti\-nuous in $X$.
\end{enumerate}
\end{theorem}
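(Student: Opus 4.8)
The plan is to reduce each part to the behavior of $S$ at the origin and then exhibit an explicit norm. By the Lemma (equivalently, Theorem~\ref{t2}), to prove (a) it suffices to produce a norm $N$ for which $S$ is continuous at $0$, and to prove (b) it suffices to produce a norm $N$ for which $S$ is discontinuous at $0$. In both cases the finite-dimensional intuition is useless: the whole point is that infinitely many independent directions are available to be reweighted.

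For part (a), I would first recall that every vector space carries at least one norm, obtained by fixing a Hamel basis of $X$ and taking the associated $\ell^1$-coefficient norm $N_0$. I would then set $N := N_0 + S$ and verify the three norm axioms: positive-definiteness is inherited from $N_0$ (since $S\ge 0$), while absolute homogeneity and subadditivity hold for $N_0$ and for $S$ separately, hence for their sum. Because $S(x)\le N(x)$ for every $x\in X$, the implication $N(x_n)\to 0 \Rightarrow S(x_n)\to 0$ is immediate, so $S$ is continuous at the origin and, by the Lemma, ubiquitously continuous.

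Part (b) is where the real work lies, and it is the step I expect to be the main obstacle. The goal is to manufacture a norm under which a fixed sequence shrinks to $0$ while $S$ stays bounded away from $0$; a single direction cannot achieve this, since scaling one vector forces $N$ and $S$ to move together, so the infinite-dimensionality must be used to supply infinitely many independent directions. The crucial claim is that the unit-level set $U:=\{v\in X:S(v)=1\}$ spans $X$. Indeed, if $S(x)>0$ then $x$ is a scalar multiple of $x/S(x)\in U$; and if $S(x)=0$, then fixing any $e$ with $S(e)=1$ one checks, using the subadditivity of $S$ (cf. \eqref{e2}), that $S(e+x)=1$, whence $x=(e+x)-e$ is a difference of elements of $U$. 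Since $\operatorname{span} U = X$ is infinite-dimensional, $U$ cannot lie in any finite-dimensional subspace, and a greedy induction then extracts a linearly independent sequence $\{x_n\}_{n=1}^{\infty}\subset U$.

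Finally, I would extend $\{x_n\}$ to a Hamel basis of $X$ and define $N$ to be the weighted $\ell^1$-coefficient norm that assigns weight $1/n$ to $x_n$ and weight $1$ to every other basis vector; this is a genuine norm for the same reasons as in part (a), the positive definiteness coming from the positivity of all the weights. Then $N(x_n)=1/n\to 0$ while $S(x_n)=1\not\to 0$, so $S$ is discontinuous at the origin, and the Lemma upgrades this to ubiquitous discontinuity. Thus the decisive points are establishing that the unit-level set of $S$ spans $X$ (guaranteeing the required independent sequence) and confirming that the reweighted coefficient functional is a bona fide norm; the reduction to the origin and all of part (a) are routine.
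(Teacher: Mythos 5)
Your proof is correct, and while its overall skeleton matches the paper's (coefficient norms built from Hamel bases, plus the reduction to continuity or discontinuity at the origin via the Lemma and Theorem~\ref{t2}), both halves take genuinely different routes. For (a), the paper does not add $S$ to a pre-existing norm: it rescales the Hamel basis itself, replacing each $h$ with $S(h)>0$ by $h/S(h)$, so that $S(b)\in\{0,1\}$ on the new basis and the plain $\ell^1$-coefficient norm already dominates $S$ (inequality \eqref{e5}), a bound the paper reuses in its closing remark on left-equivalence. Your $N=N_0+S$ is quicker and equally valid; it too gives $S(x)\leq N(x)$, hence the Lipschitz estimate via \eqref{e2}, and even yields left-equivalence trivially with $\tau=1$. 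For (b), the paper works with the set $A=\{x\in X:\,S(x)>0\}$, proves ${\rm span}\,A=X$ by essentially the same subadditivity trick you apply to the level set $U=\{v:\,S(v)=1\}$, and then invokes the nontrivial result that a spanning set contains an entire Hamel basis (Corollary 4.2.2 in \cite{K}; cf.\ \cite{H}), afterwards rescaling countably many basis elements by $n/S(h_n)$ so that the unweighted coefficient norm does the job. You need much less: only a countable linearly independent sequence in $U$, which your greedy extraction supplies (if $U$ lay in a finite-dimensional subspace, so would $X={\rm span}\,U$), followed by the standard extension of an independent set to a basis and the weights $1/n$. Your route thus trades the spanning-set-contains-a-basis theorem for a lighter, entirely routine tool, at the cost of introducing a weighted rather than plain coefficient norm; both arguments then conclude identically by transporting the discontinuity at $0$ everywhere via Theorem~\ref{t2}(b).
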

\begin{proof}
(a) Let $H$ be a Hamel basis for $X$. Then,
$$
B=\left\{\frac{h}{S(h)}:\, h\in H,S(h)>0\right\}\cup\{h\in H:\, S(h)=0\}
$$
is a basis for $X$ as well. So every $x$ in $X$ assumes a unique representation of the form
\begin{equation}\label{e3}
x=\sum_{b\in B}\alpha_{b}(x) b,\qquad \alpha_{b}(x)\in\F,
\end{equation}
where $\{b\in B:\, \alpha_{b}(x)\neq 0\}$ is a finite set.
With this representation at hand, we can easily confirm that the real-valued function
\begin{equation}\label{e4}
N(x)=\sum_{b\in B}|\alpha_{b}(x)|,\qquad x\in X,
\end{equation}
is a norm on $X$. Furthermore, by the construction of $B$, we infer that
$$
S(b)\in\{0,1\}\quad \mbox{for all}\ b\in B.
$$
Consequently, for every $x$ in $X$,
\begin{equation}\label{e5}
S(x)=S\left(\sum_{b\in B}\alpha_{b}(x) b\right)\leq \sum_{b\in B}|\alpha_{b}(x)|S(b)\leq \sum_{b\in B}|\alpha_{b}(x)|=N(x).
\end{equation}
By \eqref{e2} and \eqref{e5}, therefore,
$$
|S(x)-S(y)|\leq N(x-y),\qquad x,y\in X,
$$
and (a) follows.

(b) Consider the nonempty set
$$
A=\{x\in X:\, S(x)>0\},
$$
and let us show that
\begin{equation}\label{e6}
{\rm span}\,A = X.
\end{equation}
Indeed, assume on the contrary that ${\rm span}\,A\varsubsetneq X$. Then there exists an element $y\in X$ such that $y\not\in {\rm span}\,A$
and $S(y)=0$. Now, for every $z$ in $A$, we have
$$
S(y+z)=S(y+z)+S(y)\geq S(y+z-y)=S(z)>0,
$$
whence, both $z$ and $y+z$ belong to $A$, so $y=(y+z)-z\in {\rm span}\,A$, a~contradiction.

In view of \eqref{e6}, we may appeal to Corollary 4.2.2 in \cite{K} (compare \cite{H}), which tells us that $A$ contains a
Hamel basis $H$ for $X$. So by the definition of $A$, we have $S(h)>0$ for all $h\in H$. With this in mind, we fix
a sequence $\{h_n\}_{n=1}^{\infty}$ of distinct elements in $H$, and construct a new basis $B'$ for $X$ by replacing $h_n$ by
$$
b_n=\frac{nh_n}{S(h_n)},\qquad n=1,2,3,\ldots,
$$
and leaving all other elements in $H$ unchanged.

Surely,
$$
S(b_n)=n,\qquad n=1,2,3,\ldots .
$$
Further, modifying the representation in \eqref{e3} and the norm $N$ in \eqref{e4} by replacing the underlying basis $B$ by the new basis $B'$, we get,
$$
N(b_n)=1,\qquad n =1,2,3,\ldots .
$$
Hence, setting
$$
x_n=\frac{b_n}{n},\qquad n=1,2,3,\ldots,
$$
we obtain
$$
N(x_n)=\frac{1}{n}\to 0,
$$
whereas
$$
S(x_n)=1,\qquad n=1,2,3,\ldots .
$$
This establishes the discontinuity of $S$ at $x=0$; so by Theorem \ref{t2}(b), $S$ is discontinuous everywhere, and we are done.
\end{proof}
Falling back on Theorems \ref{t1} and \ref{t3}(b), we may record now the following simple observation.

\begin{corollary}
Let $S\neq 0$ be a seminorm on a vector space $X$ over $\F$, either $\R$ or $\C$. Then $S$ is continuous with respect to every norm on $X$ if and only if $X$ is finite-dimensional.
\end{corollary}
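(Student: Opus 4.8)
The plan is to establish the two implications separately, each resting on a theorem already proved above; as the lead-in remark anticipates, neither direction demands real work.

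For the direction asserting that finite-dimensionality forces continuity with respect to every norm, I would invoke Theorem \ref{t1}. When $X$ is finite-dimensional, all norms on $X$ are equivalent and hence induce a single norm-topology; Theorem \ref{t1} guarantees that $S$ is continuous with respect to this unique topology, and therefore with respect to every individual norm on $X$. This half makes no use of the hypothesis $S\neq 0$, since it merely transports the single known continuity statement across the (unique) topology shared by all norms.

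For the converse I would argue contrapositively. Suppose $X$ is infinite-dimensional. Since $S\neq 0$, Theorem \ref{t3}(b) supplies a norm on $X$ with respect to which $S$ is ubiquitously discontinuous---in particular, discontinuous at some point of $X$. Consequently $S$ fails to be continuous with respect to that particular norm, so it cannot be continuous with respect to every norm. This yields exactly the contrapositive of the ``only if'' claim: continuity with respect to every norm forces $X$ to be finite-dimensional.

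There is no genuine obstacle here, both directions being immediate corollaries of Theorems \ref{t1} and \ref{t3}(b). The only points deserving care are, first, recording explicitly that the assumption $S\neq 0$ is needed solely for the converse---for the zero seminorm is trivially continuous with respect to every norm even in infinite dimensions, so Theorem \ref{t3}(b) rightly excludes it---and second, matching each implication with its correct supporting theorem when passing through the finite/infinite-dimensional dichotomy.
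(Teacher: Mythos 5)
Your proposal is correct and is exactly the argument the paper intends: Theorem \ref{t1} gives the finite-dimensional direction, and the contrapositive of the converse follows from Theorem \ref{t3}(b), with $S\neq 0$ needed only there. Your added remark about the zero seminorm is a fair observation, but the argument is the paper's own.
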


We conclude this note by recalling that a seminorm $S$ is {\em left-equiva\-lent} to a norm $N$ on $X$ if there exists a constant $\tau > 0$ such that
$$
S(x)\leq\tau N(x)\quad \mbox{for all}\ x\in X.
$$
Therefore, by \eqref{e5}, {\em every seminorm $S$ on $X$ is left-equivalent to some norm on $X$}; for example, to the norm in \eqref{e4} (which, of course, depends on $S$). As indicated in \cite{G}, {\em if $X$ is finite-dimensional, then $S$ is left-equivalent to every norm on X}.

%===============================================================

%=======================================================================
\end{document}